\documentclass[11pt,reqno]{amsart}
\usepackage{amsmath, amsfonts, amsthm, bm}
\usepackage{amssymb}
\usepackage{cite}
\usepackage[pagewise]{lineno}
\usepackage{appendix}
\usepackage{xcolor}
\numberwithin{equation}{section}

\usepackage{graphicx}
\usepackage{hyperref}
\usepackage{color}
\usepackage{soul,xcolor} 
\usepackage{cite}
\usepackage{comment}
\usepackage{tikz-cd}
\usepackage{tikz}
\usetikzlibrary{positioning}

\usepackage{mathtools}

\usepackage[numeric]{amsrefs}

\theoremstyle{definition}
\newtheorem{thm}{Theorem}[section]
\newtheorem{cor}[thm]{Corollary}

\newtheorem{exa}[thm]{Example}
\newtheorem{prop}[thm]{Proposition}
\newtheorem{defi}[thm]{Definition}
\newtheorem{rem}[thm]{Remark}

\title{The local Bourbaki degree of a Plane Projective Curve}

\author{Roberto Alvarenga}
\address{\rm Roberto Alvarenga, São Paulo State University (UNESP), São José do Rio Preto, SP, Brazil}
\email{roberto.alvarenga@unesp.br}

\author{Murillo Lozano}
\address{\rm Murillo Lozano, São Paulo State University (UNESP), São José do Rio Preto, SP, Brazil}
\email{murillo.lozano@unesp.br}

\author{Parham Salehyan}
\address{\rm Parham Salehyan, São Paulo State University (UNESP), São José do Rio Preto, SP, Brazil}
\email{p.salehyan@unesp.br}

\begin{document}

	\maketitle

	\begin{abstract}
		The Bourbaki degree of a plane projective curve $F$, denoted by $\mathrm{Bour}(F)$, was introduced in \cite{Marcos} by Jardim, Nejad and Simis. It is defined as the degree of $R/I_\epsilon$, where $R = k[x,y,z]$ is the graded polynomial ring, with $k$ algebraically closed, and $I_\epsilon \subseteq R$ is the Bourbaki ideal associated with a minimal generator $\epsilon$ of the module of first syzygies of the Jacobian ideal $J_F$.
		In this work, we propose the definition of the local Bourbaki degree at a point $P \in \mathbb{P}^2$, denoted by $\mathrm{Bour}_P(F)$, and prove that $\mathrm{Bour}(F) = \sum_{P \in \mathbb{P}^2}\mathrm{Bour}_P(F).$
		
		Furthermore, we present results that follow from this local definition, which are instrumental in determining the Bourbaki degree and in establishing whether a curve is (nearly) free. In addition, we provide examples of computing the Bourbaki degree via the local formula - an approach that is computationally advantageous, as it, generically, demands fewer calculations.
	\end{abstract}

	\section{Introduction}

	Let $R$ be a commutative Noetherian ring and $M$ a finitely generated $R$-module. A \textit{Bourbaki sequence} of $M$ is a short exact sequence
    \[
    0\longrightarrow N \longrightarrow M \longrightarrow I \longrightarrow0
    \]
	of $R$-modules, where $N$ is a free $R$-module and $I$ is an ideal of $R$, called a \textit{Bourbaki ideal of $M$} (see \cite[Section 5]{Dimca3}, \cite[Section 3]{Dimca4}). The notion of Bourbaki sequence has been considered by several authors, for various purposes, see for instance \cite{Auslander}, \cite{Evans}, \cite{Herzog2}, \cite{Miller}.
 The Bourbaki degree, introduced in \cite{Marcos}, receives this name because it comes from an exact sequence as described above, which we will make explicit in the preliminary section.

	Let $F$ be a reduced plane projective curve of degree $d+1$ and $R=k[x,y,z]$, where $k$ is an algebraically closed field such that $\mathrm{char}(k)\nmid d+1$. The Bourbaki degree of $F$ as defined in \cite{Marcos},  denoted by $\mathrm{Bour}(F)$, is given by the degree of the quotient ring $R/I_\epsilon$, where $I_\epsilon \trianglelefteq R$ is the Bourbaki ideal associated with a minimal generator $\epsilon$ of the module of first syzygies of the Jacobian ideal $J_F$. By \cite[Theorem 2.1]{Marcos},
	\[ 
		\mathrm{Bour}(F) = d^2 + e(e-d) - \tau(F),
	\]
	where $e = \deg \epsilon$ and $\tau(F)$ is the Tjurina number of $F$.

    The main contribution of our work is the definition of a local Bourbaki degree at a point $P \in \mathbb{P}^2$, denoted by $\mathrm{Bour}_P(F)$. The construction is as follows: given a point $P \in \mathbb{P}^2$, without loss of generality, suppose that $P=(a:b:1)$.
    Let $I_\epsilon$ be the Bourbaki ideal associated to the minimal-degree generator $\epsilon$, as above. Let $I_\epsilon(x,y,1) \trianglelefteq k[x,y]$ be the dehomogenization of $I_\epsilon$ with respect to $z$. We define the \textit{local Bourbaki degree} at $P$,  denoted by $\mathrm{Bour}_P(F)$, by
	\[\mathrm{Bour}_P(F):= \dim_{k} \left( \dfrac{k[x,y]}{I_{\epsilon}(x,y,1)} \right)_{\langle x-a,y-b\rangle}.\]

For a point $P$ not belonging to the set $V(I_\epsilon)$, we have $\mathrm{Bour}_P(F)=0$. Our main result, given by Theorem \ref{teo1} and Corollary \ref{cor3}, states that
    \[\mathrm{Bour}(F) = \displaystyle\sum_{P \in \mathbb{P}^2}\mathrm{Bour}_P(F).\]
	We call this the \textit{local formula for Bourbaki degree}.

    The formula above shows that the local Bourbaki degree is a pointwise measure of the contribution to the global invariant, analogous to the Tjurina number, that is the sum of the local Tjurina numbers at the singular points.

	The local formula we propose is useful in the following sense. In order to compute the Bourbaki degree using the definition, whether via the formula involving the Tjurina number or directly via the local formula, one must determine the generators of $\mathrm{Syz}(J_F)$ in all cases.
   The key difference, however, lies in the fact that, in general, since the degrees of the generators of $I_{\epsilon}$ are smaller than $\deg(F)$, one has $|V(I_\epsilon)| \leq |V(J_F)|$ for singular curves. In other words, the zero set of the Bourbaki ideal is typically smaller than the set of singular points of the curve. Consequently, fewer computations of vector space dimensions are required when applying the local formula. This reduction in the number of points to be analyzed makes the local approach computationally advantageous, especially for curves with many singularities. 
   As a consequence of these explicit calculations, we prove using the local formula that nearly free curves exist in arbitrarily large degrees and that, for every integer $n \in \mathbb{N}$, one can find a plane curve $F$ with $\mathrm{Bour}(F) = n$.

	\section{Preliminaries}

    Consider $F$ a reduced curve of degree $d+1$ in the projective plane $\mathbb{P}^2$. Let $R = k[x,y,z]$ be the polynomial ring in the variables $x,y,z$ with coefficients in $k$, where $k$ is an algebraically closed field such that $\mathrm{char}(k) \nmid d+1$. The ring $R$ and the $R$-modules we present here are all graded by degree in the most natural sense, i.e., the grading is the standard one given by the total degree in $x,y,z$. Denote by $J_F$ the Jacobian ideal of $F$, which is spanned by the partial derivatives $F_x, F_y, F_z$. Thus, $\deg F_x = \deg F_y = \deg F_z = d$, since $F$ is homogeneous of degree $d+1$.
Note that we abuse the notation and denote by $F$ both the polynomial and the curve it defines.

\subsection{Construction of the Bourbaki ideal $I_\epsilon$ }

We consider the set of first syzygies of $J_F$,
	\begin{align*}
		\mathrm{Syz}(J_F) = \{ (a,b,c) \in R^3 \mid aF_x + bF_y + cF_z=0 \}.
	\end{align*}
	This set has a natural structure of a graded $R$-module, and it is finitely generated because $R$ is Noetherian. Let$\{ \epsilon_1, \ldots, \epsilon_k \}$ be a set of homogeneous generators, where $e_i = \deg \epsilon_i$, $i=1,\ldots,k$. Without loss of generality, we assume the degrees are ordered increasingly: $e_1 \leq e_2 \leq \cdots \leq e_k$.
	
	Hence, $\epsilon_1$ is a generating syzygy of minimal degree, and we call it a \textit{minimal syzygy}. For simplicity, we set $\epsilon := \epsilon_1$ and denote its degree by $e = e_1$. 
     Thus, we have the following exact sequence of graded $R$-modules
  
	\begin{align*}
		0 \longrightarrow R(-e) \stackrel{\epsilon}{\longrightarrow} \mathrm{Syz}(J_F) \longrightarrow \mathrm{coker}(\epsilon) \longrightarrow 0. \tag{1}
	\end{align*} 
    Setting $M_{\epsilon} := \mathrm{coker}(\epsilon)$, one has
	\begin{align*}
		M_{\epsilon} = \dfrac{\mathrm{Syz}(J_F)}{\langle \epsilon \rangle} = \langle \delta_2, \ldots, \delta_k\rangle,
	\end{align*}
	where $\delta_i \equiv \epsilon_{i} (\bmod\hspace{0.1cm} \epsilon)$, for $i=2,\ldots,k$. Thus $M_{\epsilon}$ is generated by the images of the remaining syzygies modulo the submodule generated by $\epsilon$.

for the sake of completeness, in the following, we prove that $M_{\epsilon}$ is isomorphic to an ideal of $R$, which we shall denote by $I_{\epsilon}$ and we present an explicit description of $I_\epsilon$. We emphasize the construction and description of $I_\epsilon$, since our definition of the local Bourbaki degree is closely related to this ideal.

Consider the following commutative diagram
    	\begin{center}
    	  \begin{tikzcd}[row sep=normal, column sep=normal]
	&	0 \arrow[d] & 0 \arrow[d] \\
		&R(-e) \arrow[d, "\epsilon"'] \arrow[r, equal] & R(-e) \arrow[d, "\tilde{\epsilon}"] \\
		0 \arrow[r] & \mathrm{Syz}(J_F) \arrow[r] \arrow[d] & R^3 \arrow[r, "\theta_F"] \arrow[d] & J_F(d) \arrow[r] \arrow[d, equal] & 0 \\
		0 \arrow[r] & M_\epsilon \arrow[r] \arrow[d] & R^3 / \tilde{\epsilon}(R(-e)) \arrow[r, "\tilde{\theta}_F"] \arrow[d] & J_F(d) \arrow[r] & 0 \\
		& 0 & 0 
	\end{tikzcd} , 
    	\end{center}
where,
\begin{center}
    $\tilde{\epsilon} : R(-e) \overset{\epsilon}{\longrightarrow} \mathrm{Syz}(J_F) \xhookrightarrow{\hspace{0.5cm}} R^3$\quad \text{ and } \quad 
$\theta_F (g_1, g_2, g_3) = g_1F_x + g_2F_y + g_3F_z $,
\end{center}
while $\tilde{\theta}_F$ has the same rule as $\theta_F.$
Hence,
\begin{align*}
    \tilde{\epsilon}(R(-e)) = \mathrm{Im}(\tilde{\epsilon}) = \{ \tilde{\epsilon}(g) \mid  g \in R(-e) \}
    = \{ \epsilon(g) \mid g \in R(-e)\}
    = \{ \epsilon\cdot g \mid g \in R(-e) \}
    = \langle \epsilon \rangle.
\end{align*}
Therefore, the lower horizontal exact sequence of the diagram can be written as
\begin{align*}
     0 \longrightarrow M_\epsilon \longrightarrow R^3 / \langle \epsilon \rangle \longrightarrow J_F (d) \longrightarrow 0.
\end{align*}
We prove that $R^3 / \langle \epsilon \rangle$ is a torsion-free $R$-module. First, note that since $\epsilon$ is a minimal generator of $\mathrm{Syz}(J_F)$, writing $\epsilon = (\epsilon_1, \epsilon_2, \epsilon_3)$, its coordinates have no common factor of degree $\geq 1$. Indeed, if there was a factor $h \in R$, with $\deg h \geq 1$, dividing $\epsilon_i$ for all $i = 1,2,3$, then we could write $\epsilon  = (\epsilon_1, \epsilon_2, \epsilon_3) = (h\cdot \epsilon_1^{\prime}, h\cdot \epsilon_2^{\prime}, h\cdot \epsilon_3^{\prime}) = h\cdot (\epsilon_1^{\prime},\epsilon_2^{\prime},\epsilon_3^{\prime}). $ Thus, since $\epsilon$ is a syzygy of $J_F$, $(\epsilon_1^{\prime},\epsilon_2^{\prime},\epsilon_3^{\prime})$ would also be a syzygy of $J_F$, of degree less than $e$, contradicting the minimality of $\epsilon$.

Next, let $0 \neq r \in R$ and $u = (u_1, u_2, u_3) \in R^3$ such that $r\cdot u = 0$ in $R^3 / \langle \epsilon \rangle$, i.e., $r\cdot u \in \langle \epsilon \rangle$. Then $r \cdot u = s \cdot \epsilon$, with $s \in R(-e)$. Let $\tilde{d} = \gcd(r,s)$ and write $r = \tilde{d} \cdot \tilde{r}$ and $s = \tilde{d} \cdot \tilde{s}$. Since $\gcd(\tilde{r}, \tilde{s}) = 1$ and $(\tilde{d} \tilde{r} u_1,\tilde{d} \tilde{r} u_2, \tilde{d} \tilde{r} u_3 ) = (\tilde{d} \tilde{s} \epsilon_1, \tilde{d} \tilde{s} \epsilon_2, \tilde{d} \tilde{s} \epsilon_3)$, it follows that $\tilde{r} \mid \epsilon_i, \text{ for } i =1,2,3$. Hence, $\tilde{r}$ must be invertible, thus $u_i = (\tilde{s}/\tilde{r})\cdot \epsilon_i$, for all $i=1,2,3$, and hence $u = (\tilde{s}/\tilde{r})\cdot \epsilon \in \langle \epsilon \rangle$.

Therefore, $R^3 / \langle \epsilon \rangle$ is torsion-free. Since $M_\epsilon$ is a submodule of $R^3 / \langle \epsilon \rangle$, it is also torsion-free. By \cite[Prop 3.5.3]{Aron}, it follows that $M_\epsilon$ is isomorphic to an ideal of $R$. In this proof, the necessity of the minimality of $\epsilon$ becomes clear.

Since we have the graded isomorphism $M_{\epsilon} \simeq I_{\epsilon}(e - d)$ (see for instance \cite[Thm 2.1, (b)]{Marcos}), the exact sequence (1) can be rewritten as
	\begin{align*}
		0 \longrightarrow R(-e) \stackrel{\epsilon}{\longrightarrow} \mathrm{Syz}(J_F) \longrightarrow I_{\epsilon} (e-d)\longrightarrow 0. \tag{2}
	\end{align*}
	Thus, we obtain a Bourbaki sequence for the module $\mathrm{Syz}(J_F)$, and hence $I_{\epsilon}$ is indeed a Bourbaki ideal. The Bourbaki degree of $F$ is defined as the degree of the $R$-module $R/I_\epsilon$.

	In the particular case where $F$ is a free curve (i.e., $\mathrm{Syz}(J_F)$ is a free $R$-module), one has $\deg(R/J_F) = d^2 + e(e-d)$, and therefore $\mathrm{Bour}(F)=0$.  Free curves are extensively discussed in the literature, since its syzygy module is the simplest possible. See for instance, \cite{Dimca1} and \cite{Dimca2}.
    
\subsection{Explicit description of the Bourbaki ideal $I_\epsilon$}	

	When $M_{\epsilon}$ is not free, its generators are not linearly independent; they satisfy algebraic relations. To determine the ideal $I_{\epsilon}$ explicitly, we proceed as follows.

We consider the relations among the generators of $M_\epsilon$ given by
\begin{align*}
\begin{cases}
c_{12} \delta_2 +\cdots + c_{1k} \delta_k = 0\\
c_{22} \delta_2 +\cdots + c_{2k} \delta_k = 0\\
\hspace{1.45cm}\vdots \\
c_{\ell2} \delta_2 +\cdots + c_{\ell k} \delta_k = 0
\end{cases}, \tag{3}
\end{align*}
and define a homomorphism $\varphi: M_{\epsilon}(d - e) \longrightarrow R$ on each generator $\delta_2, \ldots, \delta_k$ so that the identities in (3) remain valid after applying $\varphi$, i.e., 
\begin{align*}
\begin{cases}
c_{12} \varphi(\delta_2) +\cdots + c_{1k} \varphi(\delta_k) = 0\\
c_{22} \varphi(\delta_2) +\cdots + c_{2k} \varphi(\delta_k) = 0\\
\hspace{2.0cm}\vdots \\
c_{\ell2} \varphi(\delta_2) +\cdots + c_{\ell k} \varphi(\delta_k) = 0
\end{cases}, \tag{4}
\end{align*}
a condition that guarantees the injectivity of $\varphi$. Furthermore, the map is arranged so that $\gcd (\varphi(\delta_2),\varphi(\delta_3), \ldots, \varphi(\delta_k) )=1$. For this approach, we make a reduction procedure on the equation system (4) in order to obtain a simplified relation of the form $c_{ij}\varphi(\delta_j) +c_{is}\varphi(\delta_s) = 0$, for some index $i \in \{ 1,\ldots, \ell\}$ and distinct $j, s \in \{1,2,\ldots,k\}$. This is always possible since we are in a commutative ring. Thus, we set, for instance, $\varphi(\delta_j) = c_{is}$ and $\varphi(\delta_s) = -c_{ij}$.

Now, we take another equation from (4) and replace the values already assigned to $\varphi(\delta_j)$ and $\varphi(\delta_s)$. By suitably adjusting these two assignments we can define $\varphi(\delta_t)$, expressed in terms of the coefficients $c_{ij}$ and $c_{is}$, for an index $t \in \{1,2,\ldots, k \} \setminus \{j,s\}$, while preserving the coprimality condition $\gcd(\varphi(\delta_j),\varphi(\delta_s),\varphi(\delta_t) )=1$. This process is repeated until the homomorphism is completely determined on every generator of $M_\epsilon$.

	Therefore, we have $M_{\epsilon}(d-e) \simeq \mathrm{Im} \varphi = \langle \varphi(\delta_2), \ldots, \varphi(\delta_k) \rangle$, and this image will be precisely the Bourbaki ideal $I_{\epsilon}$. In the section 4. we provide some explicit examples of the construction of the map   $\varphi$.  This approach allows us to compute the local Bourbaki degree effectively and study its properties.

	\section{The local Bourbaki degree}

	In order to introduce the Bourbaki degree at a point $P \in \mathbb{P}^2$ we begin with a remark about homogeneous ideals and their zeros in projective space.
	
	\begin{rem}\label{obs1}
		
		Let $I$ be a zero-dimensional ideal of $R$ with  $V(I)=\{p_1,\dots,p_r\}$ the set of zeros of $I$ in $\mathbb{P}^2$, where $p_i=(a_i:b_i:c_i)$ for $i=1,\dots,r$. We observe that after a  projective change of coordinates, each $p_i$ can be chosen with its third coordinate equal to $1$.
Indeed, consider $s: ax+by+cz=0$ a line in $\mathbb{P}^2$ such that $p_i \notin s$, for all $i=1,\ldots,r$. Thus in the new coordinates given by
		\begin{align*}
			\begin{cases}
				x'=x\\
				y'=y\\
				z'=ax+by+cz
			\end{cases},
		\end{align*}
	 each point $p_i$ has $z\neq0$, as desired. Therefore, without loss of generality, we may assume $p_i = (a_i:b_i:1)$, for all $i=1,\ldots,r$. In the following, we make this assumption without mentioning it. 
	\end{rem}
The definition of the local Bourbaki degree is closely tied to the following theorem, which is valid for any homogeneous and zero-dimensional ideal $I$ of $k[x,y,z]$. After the proof, we will restrict ourselves to the case where the ideal $I$ in the following theorem is the Bourbaki ideal $I_\epsilon$ we constructed in the last section.

	\begin{thm}\label{teo1}
		Let $I \subseteq R$ be a homogeneous ideal such that $V(I)= \{ p_1, \ldots, p_r\} \subseteq \mathbb{P}^2$, where $p_i = (a_i:b_i:1).$  Then
		\begin{align*}
			\deg (R/I) = \displaystyle\sum_{i=1}^{r} \dim\left( \dfrac{k[x,y]}{J}\right)_{\langle x-a_i,y-b_i\rangle},
		\end{align*}
		where $J$ is the dehomogenization of $I$ with respect to $z$.
		
	\end{thm}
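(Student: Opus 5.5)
The plan is to compare both sides through the Hilbert polynomial of $R/I$ and the length of the affine coordinate ring $A=k[x,y]/J$. First, since $V(I)$ is finite and nonempty (if $V(I)=\emptyset$ both sides are zero), $R/I$ has Krull dimension $1$. Its Hilbert polynomial is therefore a constant, and $\deg(R/I)=\dim_k (R/I)_t$ for all $t\gg 0$. Because every $p_i$ has third coordinate $1$ (Remark \ref{obs1}), no point of $V(I)$ lies on the line $z=0$. By the Nullstellensatz, the ideal $I+\langle z\rangle$ then contains a power of $\langle x,y,z\rangle$, i.e.\ $(R/(I+\langle z\rangle))_t=0$ for $t\gg0$.

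Second, I would replace $I$ by its saturation $I^{sat}=I:\langle x,y,z\rangle^\infty$. This does not change the Hilbert polynomial, since $I^{sat}/I$ has finite length, so $\deg(R/I)=\deg(R/I^{sat})$. It also does not change the dehomogenization: each element of $I^{sat}$ is multiplied into $I$ by a power of $z$, so $J=I(x,y,1)=I^{sat}(x,y,1)$. Moreover $z$ is a nonzerodivisor on $R/I^{sat}$, because no associated prime of $R/I^{sat}$ is the irrelevant ideal and none contains $z$ (each corresponds to a point $p_i$ with $z\neq 0$). Hence multiplication by $z$ gives injections $(R/I^{sat})_t\hookrightarrow (R/I^{sat})_{t+1}$. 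Dehomogenization identifies the directed union $\bigcup_t (R/I^{sat})_t$ with $A=k[x,y]/J$, viewing $(R/I^{sat})_t$ as the polynomials of degree $\le t$ modulo $J$. The dimensions $\dim_k(R/I^{sat})_t$ are eventually constant and equal to $\deg(R/I)$, and this union exhausts $A$, so $\dim_k A=\deg(R/I)$. In particular $A$ is finite dimensional.

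Third, $V(J)\subseteq\mathbb{A}^2$ is exactly $\{(a_i,b_i)\}$, so $A$ is an Artinian ring with maximal ideals $\mathfrak m_i=\langle x-a_i,y-b_i\rangle$. By the structure theorem for Artinian rings (Chinese remainder theorem), $A\cong\prod_i A_{\mathfrak m_i}$. Taking $k$-dimensions gives $\dim_k A=\sum_i \dim_k (k[x,y]/J)_{\langle x-a_i,y-b_i\rangle}$, which is the claimed formula.

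The main obstacle is the second step: the identification of the stable value of $\dim_k(R/I^{sat})_t$ with $\dim_k A$. It needs the regularity of $z$ after saturation, and the fact that $(R/I^{sat})_t$ maps bijectively onto the image of the degree-$\le t$ polynomials in $A$, with that filtration exhausting $A$. I would check injectivity via $z$ being regular, and surjectivity onto the filtered pieces by homogenizing elements of $J$ and using $J=I^{sat}(x,y,1)$. The case $V(I)=\emptyset$ is trivial, since then both sides vanish.
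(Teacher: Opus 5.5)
Your proof is correct, and at the decisive step it takes a genuinely different and more careful route than the paper.

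The paper never saturates. It considers $\psi_n:(R/I)_n\to k[x,y]/J$, $f+I\mapsto f(x,y,1)+J$, and asserts that $\psi_n$ is bijective for $n\gg0$. Its surjectivity argument is your exhaustion of $A$ by images of polynomials of degree $\le t$. The one difference is that the paper needs $\dim_k A<\infty$ in advance, from $A$ being Artinian, whereas you obtain finiteness from the stabilization.

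For injectivity, the paper truncates a preimage $g_0=\sum_d g_d\in I$ of $f(x,y,1)$ to $g=\sum_{d\le n}z^{n-d}g_d$. It obtains only $g(x,y,1)\equiv f(x,y,1)\pmod J$, and then uses this as an equality. That argument uses neither $n\gg0$ nor the hypothesis that $V(I)$ avoids $z=0$, yet both are needed:
for $I=\langle x^2,xy,y^2,xz,yz\rangle$, $\psi_1$ kills $x\notin I$;
for $I=\langle y,z\rangle$, one has $J=k[x,y]$ while $\deg R/I=1$.

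Your passage to $I^{sat}$ supplies exactly the missing ingredient for injectivity: $I_t=(I^{sat})_t$ for $t\gg0$, and $z$ is regular on $R/I^{sat}$. The price is a little commutative algebra (associated primes, finite length of $I^{sat}/I$). The gain is an argument that actually closes.

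One correction for the write-up. Surjectivity onto the degree-$\le t$ piece needs nothing about $J$: just homogenize any polynomial of degree $\le t$ to degree $t$. It is injectivity that uses $J=I^{sat}(x,y,1)$. If $f(x,y,1)=g(x,y,1)$ with $g=\sum_d g_d\in I^{sat}$, take $D\ge\max(t,\deg g)$. Then the degree-$D$ forms $z^{D-t}f$ and $\sum_d z^{D-d}g_d$ have the same dehomogenization, hence are equal. So $z^{D-t}f\in I^{sat}$, and therefore $f\in I^{sat}$.
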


	\begin{proof}
		 Define the ring morphism
		\begin{align*}
			\varphi:  \hspace{0.1cm} &k[x,y,z] \longrightarrow k[x,y]\\
			&f(x,y,z) \longmapsto f(x,y,1).
		\end{align*}
		Let $J := \varphi(I)$, i.e., the dehomogenization of $I$ with respect to $z$. Since $\varphi$ is surjective, $J$ is an ideal of $k[x,y].$
	
		\textit{Claim 1:} $V(J) = \{ (a_1,b_1), \ldots, (a_r,b_r)\}$.
    
		 Fix $i$ and let $f \in I$. By hypothesis, $p_i \in V(I)$, hence $f(a_i,b_i,1)=0$. By definition, $\varphi(f) = f(x,y,1)$, then $\varphi(f)(a_i,b_i) = f(a_i,b_i,1)=0$. Thus, for every $g \in J$, we have $g(a_i, b_i)=0$, i.e., $(a_i,b_i) \in V(J)$ for all $i=1,\ldots, r$.
		
		Conversely, assume $(a,b) \in V(J)$. Then for every $g \in J$, $g(a,b) = 0$. Since $J := \varphi(I)$,  $f(a,b,1)=0$, for all $f \in I$. Hence, $(a:b:1) \in V(I)$, and therefore there exists $i \in \{ 1,\ldots, r\}$ such that $(a:b:1) = (a_i:b_i:1)$, which implies $(a,b) = (a_i, b_i)$, which completes the proof of the claim.

        Let $(R/I)_n$ denote the homogeneous component of degree $n$ of the graded ring $R/I$.
		Since $\dim R/I = 0$, for $n \gg 0$, 
        \[ \mathrm{HP}_{R/I}(n) = \ell((R/I)_n)=\deg R/I,\] where $\mathrm{HP}_{R/I}$ denotes the Hilbert polynomial of $R/I$ and $\ell((R/I)_n)$ is the length of $(R/I)_n$.   For $n \gg 0$, which we shall make explicit below, we define
		\begin{align*}
			\psi_n: \hspace{0.1cm}&(R/I)_n \longrightarrow k[x,y]/J \\
				&f(x,y,z)+I \longmapsto f(x,y,1)+J.
		\end{align*}
	It is straightforward to verify that $\psi_n$ is well-defined and is $k$-linear. In the following, 
we prove that $\psi_n$ is an isomorphism of $k$-vector spaces.

		\textit{Claim 2:} $\psi_n$ is surjective.
		
		Since $k[x,y]$ is a finitely generated $k$-algebra, so is $k[x,y]/J$.
		By \cite[Cor 9.3.5]{Herivelto}, since $k[x,y]/J$ is Artinian, it follows that $\dim_k k[x,y]/J$ is finite. Let $\{ g_1, \ldots, g_s\}$ be a basis for $k[x,y]/J$ and $N = \max\{ \deg(g_j) \mid j=1,\ldots,s \}$. Thus every polynomial in $k[x,y]/J$ has degree at most $N$, implying that in the quotient, any monomial of degree $>N$ reduces to one of degree less than or equal to $N$. Thus, to ensure surjectivity, we require $n > N$.
		
		Consider $p(x,y)+J \in k[x,y]/J$, with $\deg p(x,y)=d\leq N$. Define $F(x,y,z) := z^{n-d}p\left( \frac{x}{z}, \frac{y}{z}\right)$. Then $F \in k[x,y,z]$ is homogeneous of degree $n$. Therefore,
		\begin{align*}
			\psi_n(F(x,y,z)+I) = F(x,y,1)+J = p(x,y)+J,
		\end{align*}
		and hence, $\psi_n$ is surjective for $n \geq N$.
		
		\textit{Claim 3:} $\psi_n$ is injective.
		
		Let $f(x,y,z) + I \in (R/I)_n$ such that $\psi_n(f(x,y,z)+I) = 0$. Then $f(x,y,1)+J =0$, i.e., $f(x,y,1) \in J$. Thus, there exists $g_0(x,y,z) \in I$ such that $g_0(x,y,1) = f(x,y,1)$.
		
		We claim that $g_0$ may be chosen homogeneous of degree $n = \deg f$. Indeed, write $g_0 = \sum_{d=0}^{D} g_d$, where each $g_d \in I$ is homogeneous of degree $d$. Write $g_0 = \sum_{d\leq n} g_d + \sum_{d>n} g_d$ and define $g := \sum_{d\leq n}z^{n-d} g_d$. Then $g\in I$ and is homogeneous of degree $n$.
		Evaluating at $z=1$, we have $g(x,y,1) = \sum_{d\leq n} g_d(x,y,1)$. On the other hand,
		\begin{align*}
			f(x,y,1) = g_0(x,y,1) = \displaystyle\sum_{d\leq n} g_d(x,y,1) + \displaystyle\sum_{d> n} g_d(x,y,1).
		\end{align*}
		Since each $g_d \in I$, we have $g_d(x,y,1) \in J$, hence $\displaystyle\sum_{d>n} g_d(x,y,1) \in J$. Therefore,
		\begin{align*}
			g(x,y,1) = g_0(x,y,1) - \displaystyle\sum_{d>n} g_d(x,y,1) \equiv g_0(x,y,1) (\bmod \hspace{0.1cm} J),
		\end{align*}
		and since $g_0(x,y,1) = f(x,y,1)$, it follows that $g(x,y,1) \equiv f(x,y,1) (\bmod \hspace{0.1cm} J)$. Thus, we may assume from the start that $g_0$ is homogeneous of degree $n$.
		
		Now, taking $g_0 = g$ homogeneous of degree $n$, we have $g(x,y,1) = f(x,y,1)$. Write $g(x,y,1) = \sum_{i+j\leq n} a_{ij}x^iy^j $ and $f(x,y,1) = \sum_{i+j\leq n} b_{ij} x^iy^j $. Since both $f$ and $g$ are homogeneous of degree $n$, this yields
\begin{align*}
			g(x,y,z) = z^n \displaystyle\sum_{i+j\leq n} a_{ij}\left(\dfrac{x}{z}\right)^i\left(\dfrac{y}{z}\right)^j = z^n \displaystyle\sum_{i+j\leq n} b_{ij}\left(\dfrac{x}{z}\right)^i\left(\dfrac{y}{z}\right)^j = f(x,y,z).
		\end{align*}
		Then $g\in I$, and thus $f \in I$, implying $\psi_n$ is injective. 
        
        Therefore, $\psi_n$ is an isomorphism of $k$-vector spaces, and thus $\dim_k (R/I)_n = \dim_k (k[x,y]/J)$. Consequently, for $n\gg 0$,
		\begin{align*}
			\deg(R/I) = \ell((R/I)_n) = \dim_k (R/I)_n = \dim_k (k[x,y]/J).
		\end{align*}
		  Let $m_i'= \langle x-a_i,y-b_i\rangle$ be the maximal ideal of $k[x,y]$ associated to the points $p_i '=(a_i,b_i)$, $i=1.\ldots,r$. Since $A=k[x,y]/J$ is Artinian, by \cite[Thm 8.7]{Atiyah},
		\begin{align*}
			A \simeq A_{m_1'} \times \cdots \times A_{m_r'},
		\end{align*}
		where each $A_{m_i '}$ is a local Artinian ring. Hence $\dim_k A = \sum_{i=1}^{r} \dim_k (A_{m_i '})$. Therefore,
		\begin{align*}
			\deg(R/I) = \displaystyle\sum_{i=1}^{r} \dim_k A_{m_i '} = \displaystyle\sum_{i=1}^{r} \dim_k \left( \dfrac{k[x,y]}{J}\right)_{m_i'}=\displaystyle\sum_{i=1}^{r} \dim_k\left( \dfrac{k[x,y]}{J}\right)_{\langle x-a_i,y-b_i\rangle},
		\end{align*}
		which completes the proof.\end{proof}

	Now we can define the local Bourbaki degree at a point $P$ of a projective plane curve. We make the following construction: let $F$ be a reduced curve in $\mathbb{P}^2$ with $\deg F = d+1$ and
    $\epsilon \in \mathrm{Syz}(J_F)$ a generator of minimal degree. Associated to this generator, we have the ideal $I_\epsilon$, whose construction was detailed in the last section. It is well-known that the Bourbaki degree of $F$ is defined as $\deg R/I_\epsilon$. 
	
	On the other hand, since $I_\epsilon$ has codimension $2$ (see for instance \cite[Thm 2.1]{Marcos}), the ideal $I_\epsilon$ has finitely many zeros. Let $V(I_\epsilon) = \{ p_1, \ldots, p_s\}\subseteq \mathbb{P}^2$. Assuming $p_i = (a_i:b_i:1)$,  Theorem \ref{teo1} yields that 
	\begin{align*}
		\mathrm{Bour}(F) = \deg (R/I_\epsilon) = \displaystyle\sum_{i=1}^{s} \dim_k \left( \dfrac{k[x,y]}{I_\epsilon(x,y,1)}\right)_{\langle x-a_i,y-b_i\rangle},\tag{5}
	\end{align*}
	where $I_\epsilon (x,y,1) $ is the ideal $I_\epsilon$ dehomogenized with respect to $z$. 
	This motivates the following definition.
	
	\begin{defi}\label{def1}
		In the above notation, we define the local Bourbaki degree of $F$ at $P = (a:b:1) \in V(I_\epsilon)$ by 
		\begin{align*}
			\mathrm{Bour}_P (F) = \dim_k \left(\dfrac{k[x,y]}{I_\epsilon(x,y,1)}\right)_{\langle x-a,y-b\rangle}. 
		\end{align*}
	\end{defi}	

Previous Theorem yields:

 \begin{cor}(Local Bourbaki Formula).\label{cor3}
     $\mathrm{Bour}(F) = \displaystyle\sum_{P \in V(I_\epsilon)} \mathrm{Bour}_P(F)$.
 \end{cor}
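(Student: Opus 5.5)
The plan is to obtain the Local Bourbaki Formula directly from Theorem \ref{teo1}, applied to $I = I_\epsilon$. The work consists in checking that $I_\epsilon$ satisfies the hypotheses of that theorem and then matching each summand with Definition \ref{def1}.

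First, I would confirm that $I_\epsilon$ is homogeneous. The sequence (2) is an exact sequence of graded modules, and $M_\epsilon(d-e) \simeq I_\epsilon$ is a graded isomorphism onto the image of $\varphi$, so $I_\epsilon$ is a homogeneous ideal of $R$. Second, I would check that $V(I_\epsilon)$ is finite. Here I invoke the fact quoted from \cite[Thm 2.1]{Marcos} that $I_\epsilon$ has codimension $2$ in $R = k[x,y,z]$. Hence $\dim R/I_\epsilon \le 1$, so its projective zero set is a finite set $\{p_1,\dots,p_s\}$. By Remark \ref{obs1}, after a projective change of coordinates I may assume that each $p_i = (a_i:b_i:1)$.

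This coordinate change is the one point needing care. The quantities $\mathrm{Bour}(F) = \deg(R/I_\epsilon)$ and the local lengths are invariant under linear changes of coordinates. The degree is preserved under graded automorphisms of $R$. The local length $\dim_k$ of the localization at a point equals the length of the local ring $(R/I_\epsilon)_{(\mathfrak p)}$ of the corresponding homogeneous prime, so it does not depend on the chart. The Bourbaki ideal also transforms compatibly: the Jacobian ideal, and therefore the syzygies, change by the induced linear action. Consequently $\mathrm{Bour}_P(F)$ is well defined independently of the chosen coordinates.

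Applying Theorem \ref{teo1} then gives
\[
\mathrm{Bour}(F)=\deg(R/I_\epsilon)=\sum_{i=1}^s \dim_k\Big(\tfrac{k[x,y]}{I_\epsilon(x,y,1)}\Big)_{\langle x-a_i,y-b_i\rangle},
\]
which is equation (5). Each summand is $\mathrm{Bour}_{p_i}(F)$ by Definition \ref{def1}. Finally, for $P\notin V(I_\epsilon)$ the localization of $k[x,y]/I_\epsilon(x,y,1)$ at $P$ is zero, since some element of the ideal is a unit there. So the sum may equally be written over all $P\in\mathbb{P}^2$. The only genuine obstacle is the coordinate-independence remark above; the rest is bookkeeping.
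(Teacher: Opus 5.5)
Your proposal is correct and follows the paper's route exactly: the paper's proof is simply that the formula is equation (5), i.e.\ Theorem~\ref{teo1} applied to the homogeneous codimension-$2$ ideal $I_\epsilon$ (after the coordinate normalization of Remark~\ref{obs1}), with each summand identified through Definition~\ref{def1}. Your extra checks only make explicit what the paper leaves under ``without loss of generality'' or proves separately. These are the homogeneity of $I_\epsilon$, the chart-independence of the local lengths via the homogeneous localization, and the vanishing of the local term off $V(I_\epsilon)$, which is the paper's Remark~\ref{obs2}.
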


\begin{proof}
    The result follows from (5) and Definition \ref{def1}.
 \end{proof}

	\begin{rem}\label{obs2}
    Since for $P \notin V(I_\epsilon)$ we have $\mathrm{Bour}_P(F) = 0$, we can generalize Corollary \ref{cor3} by taking $P$ over all points of $\mathbb{P}^2$. Indeed, suppose without loss of generality that $P = (a:b:1)$ and $P \notin V(I_\epsilon)$. Then there exists $G \in I_\epsilon(x,y,1)$ such that $G(a,b,1) \neq 0$. Set $g(x,y) = G(x,y,1)$. Thus, $g \in I_\epsilon(x,y,1)$ with $g(a,b) \neq 0$, i.e., $g$ is invertible in the local ring, and therefore $(I_\epsilon(x,y,1))_{\langle x,y\rangle} = (k[x,y])_{\langle x,y\rangle}$, whence 
\begin{align*}
	\mathrm{Bour}_P(F) = \dim_{k} \left(\dfrac{k[x,y]}{I_\epsilon(x,y,1)}\right)_{\langle x-a,y-b\rangle} = \dim_{k}\left(\dfrac{k[x,y]_{\langle x-a,y-b\rangle}}{I_\epsilon(x,y,1)_{\langle x-a,y-b\rangle}}\right) =0.
\end{align*} 

\end{rem}
 We extend the definition of the local Bourbaki degree to all points in $\mathbb{P}^2$, i.e., 
\begin{align*}
\mathrm{Bour}(F) = \displaystyle\sum_{P \in \mathbb{P}^2} \mathrm{Bour}_P(F),
\end{align*}
and the previous remark ensures that this sum is finite.

\begin{cor}\label{cor1}
	$P \in V(I_\epsilon) $ if, and only if, $\mathrm{Bour}_P(F) >0$. 
\end{cor}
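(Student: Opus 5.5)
The plan is to prove the two directions separately. Throughout, write $P=(a:b:1)$ (Remark \ref{obs1}), $J=I_\epsilon(x,y,1)$ and $\mathfrak m=\langle x-a,y-b\rangle\subseteq k[x,y]$. Then
\[
\mathrm{Bour}_P(F)=\dim_k\bigl(k[x,y]_{\mathfrak m}/J_{\mathfrak m}\bigr).
\]
The whole question reduces to deciding when the localized quotient $k[x,y]_{\mathfrak m}/J_{\mathfrak m}$ is the zero ring.

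The implication ``$\mathrm{Bour}_P(F)>0\Rightarrow P\in V(I_\epsilon)$'' is the contrapositive of Remark \ref{obs2}. If $P\notin V(I_\epsilon)$, there is $g\in J$ with $g(a,b)\neq 0$. This $g$ is a unit in $k[x,y]_{\mathfrak m}$, so $J_{\mathfrak m}$ is the whole local ring and $\mathrm{Bour}_P(F)=0$. I will simply cite that remark.

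For the converse, assume $P\in V(I_\epsilon)$. By Claim 1 in the proof of Theorem \ref{teo1}, $(a,b)\in V(J)$, so every element of $J$ vanishes at $(a,b)$. Hence $J\subseteq \mathfrak m$, using that the maximal ideal of the point $(a,b)$ is exactly $\mathfrak m$ (all polynomials vanishing there; $k$ is algebraically closed, but even directly, $g(a,b)=0$ gives $g\in\mathfrak m$ by a Taylor expansion at $(a,b)$). Localizing gives $J_{\mathfrak m}\subseteq \mathfrak m k[x,y]_{\mathfrak m}$, which is a proper ideal. Therefore $k[x,y]_{\mathfrak m}/J_{\mathfrak m}\neq 0$. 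It surjects onto the residue field $k[x,y]_{\mathfrak m}/\mathfrak m_{\mathfrak m}\cong k$, so its $k$-dimension is at least $1$. Finiteness of this dimension is not even needed for positivity, though it holds by the Artinian decomposition used at the end of the proof of Theorem \ref{teo1}.

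There is no real obstacle; the only point needing care is the identification $J\subseteq\mathfrak m$, which follows from $g(a,b)=0$ for all $g\in J$ via Claim 1. This shows $\mathrm{Bour}_P(F)\geq 1$, completing the equivalence.
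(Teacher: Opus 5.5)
Your proof is correct and follows essentially the paper's route. The direction $\mathrm{Bour}_P(F)>0\Rightarrow P\in V(I_\epsilon)$ is Remark~\ref{obs2}, and for the other direction the paper also relies on $I_\epsilon(x,y,1)\subseteq\mathfrak m$. The paper phrases it as a contradiction, writing $1=h/s$ with $h\in\mathfrak m$ and $s\notin\mathfrak m$, whereas you observe directly that $J_{\mathfrak m}\subseteq\mathfrak m\,k[x,y]_{\mathfrak m}$ is proper; your version is slightly cleaner and makes explicit the inclusion $J\subseteq\mathfrak m$ that the paper leaves implicit.
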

\begin{proof}
	Without loss of generality, assume $P = (a:b:1)$ and $\mathfrak{m} = \langle x-a,y-b\rangle$.  Suppose that $\mathrm{Bour}_P(F) = 0$. Then 
	\begin{align*}
		\dim \left(\dfrac{k[x,y]}{I_\epsilon(x,y,1)}\right)_{\mathfrak{m}} = 0,
	\end{align*}
	i.e., $(I_\epsilon(x,y,1))_{\mathfrak{m}} = k[x,y]_{\mathfrak{m}}$. Now, since $1 \in k[x,y]_{\mathfrak{m}} = (I_\epsilon(x,y,1))_{\mathfrak{m}}$, there exist $h \in I_\epsilon(x,y,1)$ and $s \notin \mathfrak{m}$ such that $1 = h/s$ in the localization. By definition, there exists $t \in R\setminus \mathfrak{m}$ such that $t(h-s) = 0$ in $R$. Thus, $h=s$, which is a contradiction since $h \in \mathfrak{m}$ and $s \notin \mathfrak{m}$. Therefore, $\mathrm{Bour}_P(F)>0$. 
	
	The converse is immediate, since for $P \notin V(I_\epsilon)$, Remark \ref{obs2} states that $\mathrm{Bour}_P(F) = 0$. 		
\end{proof}

The next corollary provides a lower bound for the Bourbaki degree, in terms of the number of elements of $V(I_\epsilon)$.

\begin{cor}\label{cor2}
	Let $\ell = |V(I_\epsilon)|$. Then $\mathrm{Bour}(F) \geq \ell$, with equality valid if, and only if, $\mathrm{Bour}_P(F) = 1$, for every $P \in V(I_\epsilon)$. In particular, $F$ is nearly-free if, and only if, $V(I_\epsilon) = \{ P_0\}$ and $\mathrm{Bour}_{P_0}(F) = 1$.
\end{cor}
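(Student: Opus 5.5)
The plan is to read everything off the Local Bourbaki Formula (Corollary \ref{cor3}) combined with Corollary \ref{cor1}. By Corollary \ref{cor3},
\begin{align*}
\mathrm{Bour}(F) = \sum_{P \in V(I_\epsilon)} \mathrm{Bour}_P(F).
\end{align*}
Each term is a natural number, being the $k$-dimension of a finite-dimensional local Artinian ring. By Corollary \ref{cor1}, each term with $P \in V(I_\epsilon)$ is strictly positive, hence $\mathrm{Bour}_P(F) \geq 1$. Summing over the $\ell$ points of $V(I_\epsilon)$ gives $\mathrm{Bour}(F) \geq \ell$.

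For the equality statement I will write
\begin{align*}
\mathrm{Bour}(F) - \ell = \sum_{P \in V(I_\epsilon)} \left(\mathrm{Bour}_P(F) - 1\right).
\end{align*}
This is a sum of nonnegative integers, so it vanishes if and only if every summand vanishes, that is, if and only if $\mathrm{Bour}_P(F) = 1$ for all $P \in V(I_\epsilon)$.

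For the nearly-free statement I will use the characterization from \cite{Marcos} that $F$ is nearly free exactly when $\mathrm{Bour}(F) = 1$. Equivalently, $I_\epsilon$ is then the ideal of a single reduced point, generated by two linear forms. This characterization is the one external input; if needed, I would justify it from $\mathrm{Bour}(F) = d^2 + e(e-d) - \tau(F)$ together with the known $\tau$-characterization of nearly free curves.

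Given that, suppose $\mathrm{Bour}(F) = 1$. The lower bound forces $\ell \leq 1$. Moreover $\ell \geq 1$, because $\mathrm{Bour}(F) > 0$ implies $V(I_\epsilon) \neq \emptyset$; otherwise the sum in Corollary \ref{cor3} would be empty. So $\ell = 1$, and by the equality case $\mathrm{Bour}_{P_0}(F) = 1$. Conversely, if $V(I_\epsilon) = \{P_0\}$ and $\mathrm{Bour}_{P_0}(F) = 1$, then Corollary \ref{cor3} gives $\mathrm{Bour}(F) = 1$, so $F$ is nearly free.

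The main obstacle is pinning down the precise definition of nearly free used here and its equivalence with $\mathrm{Bour}(F) = 1$. Everything else is bookkeeping with nonnegative integers.
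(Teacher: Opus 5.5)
Your proposal is correct and follows the paper's own argument. Corollary \ref{cor1} gives $\mathrm{Bour}_P(F)\geq 1$ at each of the $\ell$ points, and the Local Bourbaki Formula sums these to get $\mathrm{Bour}(F)\geq\ell$. The paper dismisses the equality case and the nearly-free case as straightforward; you fill them in correctly, resting on the same implicit external input, namely the characterization from \cite{Marcos} that $F$ is nearly free if and only if $\mathrm{Bour}(F)=1$.
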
	

\begin{proof}
	Let $V(I_\epsilon) = \{ P_1, \ldots, P_\ell\}$. By Corollary \ref{cor1}, $\mathrm{Bour}_{P_i}(F) \geq 1$ for all $i = 1,\ldots,\ell$. Thus \[\displaystyle\sum_{i=1}^{\ell} \mathrm{Bour}_{P_i}(F) \geq \ell,\] i.e., $\mathrm{Bour}(F) \geq \ell$. The remaining statements are obtained in a straightforward way.\end{proof}

\section{Applications of the Local Formula}

    We start this section with an example using the local formula we introduced in the last section. 
		\begin{exa}\label{ex1}
		Computation of the Bourbaki degree of the nodal cubic $F: y^2z - x^3 - x^2z = 0$ using the local formula. By \cite{Singular}, $\mathrm{Syz}(J_F) = \langle \epsilon_1, \epsilon_2, \epsilon_3, \epsilon_4 \rangle$, where
		\begin{center}
			$\epsilon_1 = (3y^2+2xz,\; -yz,\; 0)$,\quad
			$\epsilon_2 = (9xy+6yz,\; -3xz-2z^2,\; 2z^2)$,\\[2mm]
			$\epsilon_3 = (3x^2+2xz,\; 0,\; -yz)$,\quad
			$\epsilon_4 = (0,\; 3x^2+2xz,\; -3y^2-2xz)$.
		\end{center}
		Moreover, these generators satisfy
		\begin{center}
			$\begin{cases}
				(3x+2z)\cdot \epsilon_1 + (-y)\cdot \epsilon_2 +(-2z) \cdot \epsilon_3=0\\[2mm]
				x\cdot\epsilon_2 + (-3y)\cdot \epsilon_3+z\cdot \epsilon_4=0
			\end{cases}$.
		\end{center}
		Consider $M_{\epsilon_1} = \dfrac{\mathrm{Syz}(J_F)}{R(-2)\cdot \epsilon_1} = \langle \delta_2, \delta_3, \delta_4\rangle$, where $\delta_i \equiv \epsilon_{i} \pmod{\epsilon_1}, i=2,3,4$. Hence, in the quotient,
		\begin{align*}
			\begin{cases}
				y\cdot\delta_2+2z\cdot \delta_3=0 \\[2mm]
				x\cdot \delta_2-3y\cdot \delta_3+z\cdot\delta_4=0
			\end{cases}.
		\end{align*}

		By \cite[Thm. 2.1]{Marcos}, there exists an ideal $I_{\epsilon_1}$ of $R$ such that $M_{\epsilon_1} \simeq I_{\epsilon_1}$. We define $\varphi: M_{\epsilon_1} \longrightarrow R$ on the generators by
		\begin{center}
			$\varphi(\delta_2) = 2z^2,\quad \varphi(\delta_3) = -yz \quad\text{and}\quad \varphi(\delta_4) =-(2xz+3y^2)$.
		\end{center}
		We claim that $\varphi$ is injective. Indeed, let $\alpha\in \ker(\varphi)$. Since $\alpha \in M_{\epsilon_1}$, we can write $\alpha = a\delta_2+b\delta_3+c\delta_4$, with $a,b,c\in R$. Consequently, \[0=\varphi(\alpha) = a\varphi(\delta_2) +b\varphi(\delta_3)+ c\varphi(\delta_4) = 2az^2-byz+c(-2xz-3y^2).\] Thus, $z(2az-by-2cx)=3cy^2$, and we have $z \mid 3cy^2$. Write $c=\mu z$, with $\mu\in R$.
		
		Substituting back into the initial equation, one obtains $2az-by-\mu(2xz+3y^2)=0$, implying $2z(a-\mu x) = y(b +3\mu y)$.
		Thus, $y \mid 2z(a -\mu x)$, i.e., $y \mid a-\mu x$, and we write $a-\mu x = \lambda y$, with $\lambda \in R$. Hence $b = 2\lambda z - 3\mu y$.
		Replacing the obtained values of $a,b,c$ into $\alpha$ yields
		\begin{align*}
			\alpha = (\lambda y +\mu x)\delta_2+(2\lambda z-3\mu y)\delta_3+\mu z \delta_4
			= \lambda(y\delta_2+2z\delta_3) + \mu(x\delta_2-3y\delta_3+z\delta_4)=0.
		\end{align*}
		Therefore $M_{\epsilon_1} \simeq \mathrm{Im}(\varphi) = \langle 2z^2,\; -yz,\; 2xz+3y^2\rangle$, and we may take this ideal as $I_{\epsilon_1}$.
		Computing $V(I_{\epsilon_1})$ gives only the point $(1:0:0)$. Hence $I_{\epsilon_1}(1,y,z) = \langle z^2,\; yz,\; 2z+3y^2 \rangle$. Consequently,
		\begin{align*}
			\mathrm{Bour}(F) = \dim_{k}\left( \dfrac{k[y,z]}{I_{\epsilon_1}(1,y,z)}\right)_{\langle y,z\rangle}=3.
		\end{align*}
	\end{exa}

    	The next example is the computation of the Bourbaki degree of a curve using all available methods. We see that, indeed, the number of singular points is greater than the number of points in $V(I_\epsilon)$.

	\begin{exa}\label{ex2}

		We compute the Bourbaki degree of the curve $F: y^2z^2 - x^4 + 2x^3z - x^2z^2 = 0$, using each of the three available methods.
		The Jacobian ideal of $F$ is given by $J_F = \langle yz^2, x^2z+2y^2z-xz^2, x^3+3y^2z-xz^2 \rangle$. Hence, the singular points of $F$ are $p_1 = (0:0:1)$, $p_2 = (0:1:0)$, and $p_3 = (1:0:1)$.
		By \cite{Singular}, $\tau_{p_1} (F) = 1$, $\tau_{p_2}(F) = 3$, and $\tau_{p_3}(F)=1$. Therefore, $\tau(F) = 5$.
		Using the software again, we obtain $\mathrm{Syz}(J_F) = \langle\epsilon_1, \epsilon_2, \epsilon_3 \rangle$, with $2 =\deg \epsilon_1 \leq \deg \epsilon_2 \leq \deg \epsilon_3=3$ and
		\begin{center}
			$(x^2+2y^2-xz)\cdot \epsilon_1 + (-2xy+yz)\cdot\epsilon_2 + (z)\cdot \epsilon_3=0. $
		\end{center}
		Thus, we have $d = 3$ and $e=2$. Hence, by the Bourbaki degree formula,
		\begin{center}
			$ \mathrm{Bour}(F) = 3^2 + 2^2 - 2\cdot3-5 = 2.$
		\end{center}
		Alternatively, we determine the Bourbaki ideal associated to the minimal syzygy $\epsilon_1$. Considering $\epsilon_1$ as a minimal generator, we define
		\begin{center}
			$ M_{\epsilon_1} = \dfrac{\mathrm{Syz}(J_F)}{R(-2)\cdot \epsilon_1} = \langle \delta_1, \delta_2 \rangle$,
		\end{center}
		where $\delta_i \equiv \epsilon_{i+1} (\bmod \hspace{0.1cm} \epsilon_1) $. In the following we determine the Bourbaki ideal $I_{\epsilon_1}$ such that $M_{\epsilon_1} \simeq I_{\epsilon_1} (-1)$.
		Define the map $\varphi: M_{\epsilon_1} (1)\longrightarrow R$, where $\varphi(\delta_1) = z$ and $\varphi(\delta_2) = 2xy-yz$. Clearly, $\varphi$ is well defined and is a homomorphism. Moreover, it is injective.
		Indeed, let $\alpha \in \ker(\varphi)$. On the one hand, $\alpha = a\cdot \delta_1 + b\cdot \delta_2$. Furthermore,
		\begin{center}
			$0 = \varphi(\alpha) = az+b(2xy-yz) = az + 2bxy -byz.$
		\end{center}
		Thus, $z(a-by) = -2bxy$, i.e., $z \mid b$ and we write $b = \lambda_1z$, $\lambda_1 \in R$. Replacing $b = \lambda_1z$ into the equation above, we obtain $a = \lambda_1yz - 2\lambda_1xy$.
		
		Therefore, $\alpha = (\lambda_1yz-2\lambda_1xy) \delta_1 + \lambda_1 z \delta_2 = \lambda_1[(yz-2xy)\delta_1 + z\delta_2 ]=0$, and it follows that $M_{\epsilon_1}(1) \simeq \mathrm{Im}(\varphi) = \langle 2xy-yz, z \rangle$.

		Finally, taking $I_{\epsilon_1} = \langle 2xy-yz, z\rangle$, one  obtains $M_{\epsilon_1} \simeq I_{\epsilon_1}(-1)$. First, we calculate the Bourbaki degree by the definition.
		In the quotient $A = R/I_{\epsilon_1}$, we have $z = 0$ and $2xy=yz$, which implies that there are no mixed terms. Therefore, if $p(x,y,z) \in A$, then \[p(x,y,z) = a_0 + a_1(x+y) +a_2(x^2+y^2) + a_3 (x^3 + y^3)+\cdots,\] and hence $\mathrm{HP_A}(n) = 2$, for $n \geq 1$ and we conclude that $\mathrm{Bour}(F) = \deg (A) = 2$.

		Using the local Bourbaki formula, we determine the zeros of the ideal $I_{\epsilon_1}$, which can be easily verified to be $p_x = (1:0:0)$ and $p_y = (0:1:0)$.
		In the case of the point $p_x$, the dehomogenized Bourbaki ideal is $I_{\epsilon_1}(1,y,z) = \langle 2y-yz, z\rangle = \langle y,z \rangle$, and in the case of $p_y$, we have $I_{\epsilon_1}(x,1,z) = \langle 2x-z,z\rangle = \langle x,z\rangle$.
		In each case, it is easy to see that the local Bourbaki degree is $1$, and finally, by the local formula for the Bourbaki degree, one has $\mathrm{Bour}(F) = 1+1 = 2$. We observe that $|V(I_\epsilon)|=2 < 3=|V(J_F)|$.
	\end{exa} 
	
In the following result, we present a family of nearly-free curves that can have arbitrarily large degree. Moreover, we will explicitly compute the Tjurina number of this family of curves, in order to show that determining the local Tjurina numbers is not a straightforward task to carry out without Software. We see here that the number of singular points is greater than or equal to the number of points in $V(I_\epsilon)$.

	\begin{prop}\label{prop1}
		Let $F: y^mz^{n-m} - x^n=0$, with $n>m>1$. Then $\tau(F) = (n-1)(n-2)$ and $\mathrm{Bour}(F)=1$.
	\end{prop}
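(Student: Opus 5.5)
The plan is to compute $\mathrm{Syz}(J_F)$ completely by hand, read off a minimal syzygy $\epsilon$ and the ideal $I_\epsilon$ with the procedure of Section 2, and then apply Corollary \ref{cor2}. Throughout, $\deg F=n$, so $d=n-1$, and the partial derivatives are
\[
F_x=-nx^{n-1},\qquad F_y=m\,y^{m-1}z^{n-m},\qquad F_z=(n-m)\,y^{m}z^{n-m-1}.
\]
I assume, as the statement implicitly does, that $F$ is reduced and that $\mathrm{char}(k)\nmid n,m,n-m$, so that every constant appearing below is invertible.

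\textbf{Step 1: the syzygy module.} Let $(a,b,c)$ be a syzygy. Since $F_y$ and $F_z$ share the factor $y^{m-1}z^{n-m-1}$, the syzygy condition reads
\[
n x^{n-1}a=y^{m-1}z^{n-m-1}\bigl(mzb+(n-m)yc\bigr).
\]
Because $x^{n-1}$ is coprime to $y^{m-1}z^{n-m-1}$, we can write $a=y^{m-1}z^{n-m-1}a'$, and the condition becomes $mzb+(n-m)yc=nx^{n-1}a'$. The left-hand side lies in $\langle y,z\rangle$, while $x^{n-1}\notin\langle y,z\rangle$, so $a'\in\langle y,z\rangle$.

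Write $a'=\alpha y+\beta z$. Subtracting the two particular syzygies
\[
\epsilon_2=\Bigl(y^{m}z^{n-m-1},\,0,\,\tfrac{n}{n-m}x^{n-1}\Bigr),\qquad \epsilon_3=\Bigl(y^{m-1}z^{n-m},\,\tfrac{n}{m}x^{n-1},\,0\Bigr),
\]
weighted by $\alpha$ and $\beta$, reduces to the case $a=0$. There we must solve $mzb+(n-m)yc=0$, whose solutions are exactly the multiples of
\[
\epsilon=(0,\,(n-m)z,\,-my),
\]
since $y$ and $z$ are coprime. Hence $\mathrm{Syz}(J_F)=\langle\epsilon,\epsilon_2,\epsilon_3\rangle$ with $e=\deg\epsilon=1$ and $\deg\epsilon_2=\deg\epsilon_3=n-1$. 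This $\epsilon$ has minimal degree, since $e=0$ would force a linear dependence among $F_x,F_y,F_z$.

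\textbf{Step 2: the Bourbaki ideal.} A direct computation gives
\[
z\,\epsilon_2-y\,\epsilon_3=\Bigl(0,\,-\tfrac{n}{m}x^{n-1}y,\,\tfrac{n}{n-m}x^{n-1}z\Bigr)=\lambda\,x^{n-1}\epsilon
\]
for a suitable nonzero constant $\lambda$. I expect this to generate all relations modulo $\epsilon$. So in $M_\epsilon=\langle\delta_2,\delta_3\rangle$ the only relation is $z\delta_2-y\delta_3=0$. Following Section 2, set $\varphi(\delta_2)=y$ and $\varphi(\delta_3)=z$; injectivity follows as in Example \ref{ex2}, because $y$ and $z$ are coprime. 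This gives $I_\epsilon=\langle y,z\rangle$, consistent with the twist $M_\epsilon\simeq I_\epsilon(e-d)$ since $\deg\delta_i-1=n-2=d-e$. Then $V(I_\epsilon)=\{(1:0:0)\}$, and dehomogenizing at $x=1$ gives the maximal ideal $\langle y,z\rangle$, so $\mathrm{Bour}_{(1:0:0)}(F)=1$. By Corollary \ref{cor3}, $\mathrm{Bour}(F)=1$, and Corollary \ref{cor2} shows in addition that $F$ is nearly free.

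\textbf{Step 3: the Tjurina number.} This follows from the global formula:
\[
\tau(F)=d^2+e(e-d)-\mathrm{Bour}(F)=(n-1)^2+1-(n-1)-1=(n-1)(n-2).
\]
As a cross-check I would also compute it locally. The singular points are $(0:0:1)$ and $(0:1:0)$, where the local equations are the quasi-homogeneous germs $y^m-x^n$ and $z^{n-m}-x^n$. For quasi-homogeneous germs $\tau=\mu$, which gives $(m-1)(n-1)+(n-m-1)(n-1)=(n-1)(n-2)$.

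The main obstacle I expect is the completeness claim in Step 1: making sure $\langle\epsilon,\epsilon_2,\epsilon_3\rangle$ is all of $\mathrm{Syz}(J_F)$, and that $z\delta_2-y\delta_3=0$ generates the relations in $M_\epsilon$. The coprimality argument above should handle the first point. For the second, if $r\delta_2+s\delta_3=0$, comparing first coordinates gives $ry^m z^{n-m-1}+sy^{m-1}z^{n-m}=0$, that is $ry+sz=0$. This forces $(r,s)$ to be a multiple of $(z,-y)$.
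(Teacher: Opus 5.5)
Your proof is correct, up to one slip. The solutions of $mzb+(n-m)yc=0$ are the multiples of $(0,(n-m)y,-mz)$, not of $(0,(n-m)z,-my)$. The latter is not a syzygy: substituting it gives $m(n-m)(z^2-y^2)\neq 0$. With $\epsilon=(0,(n-m)y,-mz)$, your Step 2 identity holds in the form $z\epsilon_2-y\epsilon_3=-\frac{n}{m(n-m)}\,x^{n-1}\epsilon$, and nothing downstream changes. In the cross-check, $(0:1:0)$ is singular only when $n-m>1$; this is harmless, since its contribution $(n-m-1)(n-1)$ vanishes otherwise.

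The Bourbaki half of your argument is the paper's. You have the same relation up to relabeling the $\delta$'s, the same $\varphi$, the same $I_\epsilon=\langle y,z\rangle$, and the same single point $(1:0:0)$. You differ from the paper in two places.

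\begin{itemize}
\item \textbf{Syzygy module.} The paper takes the generators of $\mathrm{Syz}(J_F)$ and the relation $-x^{n-1}\epsilon_1+y\epsilon_2-z\epsilon_3=0$ from Singular. Strictly speaking, that only certifies individual pairs $(m,n)$. Your coprimality argument proves this structure for the whole family at once, so on this point your version is the more complete proof.
\item \textbf{Tjurina number.} The paper computes $\tau$ directly from the local Tjurina algebras at $(0:1:0)$ and $(0:0:1)$, getting $(n-1)(n-m-1)$ and $(n-1)(m-1)$. You instead read $\tau$ off the formula $\mathrm{Bour}(F)=d^2+e(e-d)-\tau(F)$ with $e=1$, $d=n-1$. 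Your route is shorter and shows that once $e$ and $\mathrm{Bour}(F)$ are known, the Tjurina number comes for free. The paper's route establishes the two claims independently, so they corroborate each other through that formula; your quasi-homogeneous cross-check plays that role for you.
\end{itemize}

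Your hypothesis $\mathrm{char}(k)\nmid n,m,n-m$ is genuinely needed, and the paper uses it silently in its local computation as well. For example, with $\mathrm{char}(k)=2$, $m=2$, $n=3$ one gets $J_F=\langle x^2,y^2\rangle$ and $\tau(F)=4\neq 2$. Reducedness, by contrast, need not be assumed: a repeated factor of $F$ would divide $F_x=-nx^{n-1}$, but $x\nmid F$.
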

    \begin{proof}
		First, we determine the singular points of $F$. We have \[J_F = \langle -nx^{n-1}, my^{m-1}z^{n-m}, (n-m)y^mz^{n-m-1} \rangle.\]
		
		Solving the system $F=F_x=F_y=F_z=0$, we obtain two possibilities: 
		
		$(1)$ if $n-m>1$, then $\mathrm{Sing}(F)={ (0:1:0),(0:0:1) }$,
		
		$(2)$ if $n-m=1$, then $\mathrm{Sing}(F) = {(0:0:1)}$. 
		
		We treat case $(1)$, since case $(2)$ follows in a completely analogous way. Let $p_1 = (0:1:0)$ and $p_2 = (0:0:1)$. We compute $\tau_{p_1}(F)$.
		
		Setting $y=1$, we obtain $f = z^{n-m} - x^n$, $f_x = -nx^{n-1}$, and $f_z = (n-m)z^{ n-m-1}$. Since $p_1 \in F$ is the unique singular point in the chart $y \neq 0$,  
        \[\tau_{p_1}(F) := \mathrm{dim}_k \left(\frac{k[x,z]}{\langle f, f_x, f_z\rangle}\right)_{\langle x,z \rangle} = \mathrm{dim}_k \frac{k[x,z]}{\langle f, f_x, f_z\rangle},\] 
        where $f = F(x,1,z)$, $f_x = F_x(x,1,z)$, and $f_z = F_z(x,1,z)$. In the quotient, we have $x^i = 0$, for all $i \geq n-1$ and $z^{j} =0$, for all $j \geq n-m-1$.
	Thus, $\tau_{p_1}(F) = (n-1)\cdot(n-m-1)$. Analogously, we conclude that $\tau_{p_2}(F) = (n-1)\cdot(m-1)$. Therefore,
		\[
		    \tau(F) = \tau_{p_1}(F) + \tau_{p_2}(F) = (n-1)\cdot(n-m-1) + (n-1)\cdot (m-1) = (n-1)\cdot (n-2).
		\]
        Now, it remains to show that $\mathrm{Bour}(F)=1$. We prove this using the local Bourbaki formula. By \cite{Singular}, $\mathrm{Syz}(J_F) = \langle \epsilon_1, \epsilon_2, \epsilon_3\rangle$, with $ \deg \epsilon_1=1$ and $\deg \epsilon_2 = \deg\epsilon_3 = n-1$, that satisfies 
        \[ (-x^{n-1})\cdot \epsilon_1 + y \cdot \epsilon_2 + (-z)\cdot \epsilon_3=0.
		\]
			
		Let $M_{\epsilon_1} = \dfrac{\mathrm{Syz}(J_F)}{R(-1)\cdot \epsilon_1} = \langle \delta_1, \delta_2\rangle$, where $\delta_i \equiv \epsilon_{i+1} (\bmod \hspace{0.1cm} \epsilon_1)$, for $i=1,2$. Hence, in $M_{\epsilon_1}$, we have $y\delta_1-z\delta_2=0$. Thus, there exists an ideal $I_{\epsilon_1}$ of $R$ of codimension $2$ such that $M_{\epsilon_1} \simeq I_{\epsilon_1} (2-n)$. We now determine this ideal.
		
		Define $\varphi: M_{\epsilon_1}(n-2) \longrightarrow R$ by $\varphi(\delta_1) = z$ and $\varphi(\delta_2) = y$. Clearly $\varphi$ is well-defined and is a homomorphism. Moreover, $\varphi$ is injective. Indeed, let $\alpha \in \ker(\varphi)$. Then $\alpha = a\delta_1+b\delta_2$, with $a,b\in R$. Moreover,
		$0=\varphi(\alpha) = a\varphi(\delta_1)+b\varphi(\delta_2) = az+by$, implying $az = -by$. 
		
		Thus, $z \mid b$, i.e., we can write $b = cz$, with $c\in R$. Substituting into the above equation, we obtain $a = -cy$. Hence, $\alpha = -cy\delta_1+cz\delta_2=-c(y\delta_1-z\delta_2) = 0$.
		
		Therefore, $M_{\epsilon_1} (n-2)\simeq \mathrm{Im}(\varphi) = \langle y,z\rangle$. Taking $I_{\epsilon_1} = \langle y,z\rangle$, we have $M_{\epsilon_1} \simeq I_{\epsilon_1} (2-n)$.
		
		Calculating the zeros of the Bourbaki ideal $I_{\epsilon_1}$, we have $V(I_{\epsilon_1}) = \{(1:0:0)\}$. Thus, as there is a single point, the local Bourbaki formula guarantees that
\begin{align*}
    \mathrm{Bour}(F) = \dim_k \left( \dfrac{k[y,z]}{\langle y,z \rangle}\right)_{\langle y,z \rangle} =\dim_k k_{\langle y,z\rangle}=1,
\end{align*}
and all the assertions follow.		\end{proof}

 In the last two results of this section, we apply the local Bourbaki formula to ensure that given $n \in \mathbb{N}$, there exists a projective plane curve $F$ such that $\mathrm{Bour}(F)=n$.

	\begin{prop}\label{prop3}
		Let $F: x^{2b+1}z +x^{b+1}y^{b+1}+y^{2b+1}z=0 $ be a curve in $\mathbb{P}^2$, with $b\geq2$. Then $\mathrm{Bour}(F) =b+4$.  
	\end{prop}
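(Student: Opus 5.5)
The plan is to follow the same pattern as in Proposition \ref{prop1} and Example \ref{ex2}: compute $\mathrm{Syz}(J_F)$ explicitly, build the Bourbaki ideal $I_\epsilon$ through a map $\varphi$ as in Section 2, and then evaluate $\mathrm{Bour}(F)$ with the local Bourbaki formula (Corollary \ref{cor3}). As a cross-check I will also use the global formula $\mathrm{Bour}(F)=d^2+e(e-d)-\tau(F)$ from \cite[Theorem 2.1]{Marcos}, with $d=2b+1$.

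\textbf{Step 1 (Jacobian ideal).} We have $\deg F=2b+2$, so $d=2b+1$, and
\[
F_x=(2b+1)x^{2b}z+(b+1)x^by^{b+1},\quad F_y=(b+1)x^{b+1}y^b+(2b+1)y^{2b}z,\quad F_z=x^{2b+1}+y^{2b+1}.
\]
The Euler relation $xF_x+yF_y+zF_z=(2b+2)F$ gives no syzygy, since $F\notin J_F$ in general. The natural source of syzygies is the near-cancellation
\[
xF_x-yF_y=(2b+1)z\,(x^{2b+1}-y^{2b+1}).
\]
I will combine this with $zF_z=z(x^{2b+1}+y^{2b+1})$, and with multiples of $x^b$ and $y^b$ applied to $F_x,F_y$, to produce homogeneous syzygies. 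I expect a minimal one of degree $e$ roughly $b+1$, for instance of the shape $(\alpha y^{b+1}+\cdots,\ \beta x^{b+1}+\cdots,\ \gamma x^by^b\cdots)$.

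The exact generators and their relations (the analogue of system (3)) I would obtain with \cite{Singular}, as the paper does elsewhere. I would then verify by hand that they are syzygies and that the stated relation among them holds.

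\textbf{Step 2 (the Bourbaki ideal).} Set $M_\epsilon=\mathrm{Syz}(J_F)/\langle\epsilon\rangle$ and write the induced relations on the $\delta_i$. Then define $\varphi$ on the generators by the cross-coefficients, exactly as in Proposition \ref{prop1}. Injectivity is proved by the same divisibility argument: from $\sum a_i\varphi(\delta_i)=0$ one peels off common factors and shows that the coefficient vector is a combination of the relations. This identifies $I_\epsilon=\mathrm{Im}\,\varphi$, which has codimension $2$.

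\textbf{Step 3 (local computation).} Determine $V(I_\epsilon)$. By the symmetry $x\leftrightarrow y$ of $F$, I expect this set to consist of a few coordinate points, such as $(1:0:0)$, $(0:1:0)$ and $(0:0:1)$. Dehomogenize at each point. At each point, compute $\dim_k$ of the localized quotient by exhibiting a monomial basis: show that high powers of the local coordinates lie in the ideal, and count the surviving monomials, as in Example \ref{ex1}. The contributions should add up to $b+4$, plausibly as $b+2$ at the most degenerate point plus $1+1$ at the others.

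For the cross-check I would compute $\tau(F)$. This requires the singular points: $(0:0:1)$ (local equation $x^{2b+1}+x^{b+1}y^{b+1}+y^{2b+1}$, whose Tjurina number I would compute via a monomial basis of the local Tjurina algebra) and any others on $F_z=0$. I would then check that $d^2+e(e-d)-\tau(F)=b+4$.

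\textbf{Main obstacle.} The hardest step is Step 1: finding the full generating set of $\mathrm{Syz}(J_F)$ uniformly in $b$, and proving it generates, not just that its elements are syzygies. I would first find generators for small $b$ with software and guess the pattern. To prove that they generate, I would compare degrees: the Hilbert function forced by the resolution must match the one obtained from $\tau(F)$ and the degree of $R/J_F$. If that becomes unwieldy, I would rely on the global formula only to confirm the value of $e$, and then do the local dimension counts carefully.
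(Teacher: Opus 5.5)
Your route is the paper's route: Singular for $\mathrm{Syz}(J_F)$, an explicit $\varphi$ on $M_{\epsilon_1}$, then the local formula. The gap is that the proposal stops before producing any of the data that make up the proof. The predictions you offer in their place are wrong in ways that would break both the local count and your cross-check.

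The minimal degree is $e=b+2$, not about $b+1$; the paper's generating set has four generators of degree $b+2$ and one of degree $2b$. A degree-$(b+2)$ syzygy comes from the identity $xF_x+yF_y-(2b+1)zF_z=(2b+2)x^{b+1}y^{b+1}$, for instance
\[
\sigma=\bigl((2b+1)x^{b+1}z-(b+1)xy^{b+1},\ (2b+1)x^{b}yz+(b+1)y^{b+2},\ -(2b+1)z\,\bigl((2b+1)x^{b}z+(b+1)y^{b+1}\bigr)\bigr),
\]
and there is none of lower degree. This matters for your cross-check. The only singular point is $(0:0:1)$ and $\tau(F)=3b^2+2b-1$, so $d^2+e(e-d)-\tau(F)$ with $d=2b+1$ gives $b+4$ for $e=b+2$, but $b+2$ for $e=b+1$.

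Nor is the local picture three coordinate points contributing $b+2,1,1$. The paper's ideal is $I_{\epsilon_1}=\langle yz^2,x^2z,xyz,(2b+1)y^bz-(b+1)x^{b+1}\rangle$, so $V(I_{\epsilon_1})=\{(0:1:0),(0:0:1)\}$. The dehomogenized ideals are $\langle (2b+1)z-(b+1)x^{b+1},\,x^{b+2}\rangle$ in the chart $y=1$ and $\langle y,x^2\rangle$ in the chart $z=1$, of lengths $b+2$ and $2$. Your symmetry heuristic is unsound: $\mathrm{Syz}(J_F)_{b+2}$ has several independent elements, $I_\epsilon$ depends on which one you choose, and $x\leftrightarrow y$ moves $\epsilon$. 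For example, $\sigma$ vanishes at the smooth point $(1:0:0)$ but not at $(0:1:0)$, so its Bourbaki ideal has $(1:0:0)$, not $(0:1:0)$, in its zero set. A small slip as well: $F\in J_F$ by Euler's formula; $(x,y,z)$ fails to be a syzygy simply because $xF_x+yF_y+zF_z=(2b+2)F\neq 0$.

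Your concern about uniformity in $b$ is legitimate, and the paper does not settle it either: it quotes Singular output for all $b\ge 2$. The global formula gives $\mathrm{Bour}(F)=b+4$ for every $b$ without the generating set or $\varphi$, from three facts.

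First, $\sigma$ is a syzygy, by direct check.

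Second, no nonzero syzygy has degree $\le b+1$. Modulo $z$, $F_x,F_y,F_z$ reduce to $(b+1)x^by^{b+1}$, $(b+1)x^{b+1}y^b$, $x^{2b+1}+y^{2b+1}$, whose syzygies of degree $<2b$ are multiples of $(x,-y,0)$. So such a syzygy is $p\,(x,-y,0)+z(u,v,w)$ with $\deg p\le b$. Using $xF_x-yF_y=(2b+1)z(x^{2b+1}-y^{2b+1})$ gives $uF_x+vF_y+wF_z=-(2b+1)p\,(x^{2b+1}-y^{2b+1})$. Expanding in powers of $z$ and comparing the coefficients of $x^{2b+1+i}y^j$ and $x^iy^{2b+1+j}$ forces $p=0$, and you induct on the degree.

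Third, the local equation $f=x^{2b+1}+y^{2b+1}+x^{b+1}y^{b+1}$ at $(0:0:1)$ is semi-quasi-homogeneous, so $\mu=4b^2$. The identity $xf_x+yf_y=(2b+1)f+x^{b+1}y^{b+1}$ shows $\mu-\tau=\dim_k x^{b+1}y^{b+1}M_f=(b-1)^2$, where $M_f$ is the Milnor algebra.

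Then $(2b+1)^2+(b+2)(1-b)-(3b^2+2b-1)=b+4$. This proves the statement for all $b$ but says nothing about local contributions. For those you genuinely need an explicit $I_\epsilon$ for a specified $\epsilon$, as the paper computes.
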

	\begin{proof}
		By \cite{Singular}, we conclude that $\mathrm{Syz}(J_F) = \langle \epsilon_1, \epsilon_2, \epsilon_3, \epsilon_4, \epsilon_5 \rangle$, where $\deg \epsilon_{i} = b+2$ for $i = 1,2,3,4$ and $\deg \epsilon_5 = 2b$. Moreover, the generators satisfy the following relations
		\begin{align*}
			\begin{cases}
				(2b+1)z\cdot \epsilon_1 + (-y) \cdot \epsilon_3 + (b+1)x \cdot \epsilon_4 = 0\\
				-(b+1)y \cdot \epsilon_1 + x\cdot \epsilon_2 + (-(2b+1))z\cdot \epsilon_4 = 0\\
				(-y^{b-1})\cdot \epsilon_2 + x^{b-1} \epsilon_3 + (4b+2)z\cdot \epsilon_5=0
			\end{cases}.
		\end{align*}
		We choose $\epsilon_1$ as a minimal generating syzygy. Thus, $M_{\epsilon_1} = \langle \delta_2, \delta_3, \delta_4, \delta_5 \rangle$, where $\delta_i \equiv \epsilon_{i} (\bmod \hspace{0.1cm} \epsilon_1)$ for $i=2,3,4,5$. Hence, in the quotient the following relations hold
		\begin{align*}
			\begin{cases}
				(-y) \cdot \delta_3 + (b+1)x \cdot \delta_4 = 0\\
				+ x\cdot \delta_2 + (-(2b+1))z\cdot \delta_4 = 0\\
				(-y^{b-1})\cdot \delta_2 + x^{b-1} \delta_3 + (4b+2)z\cdot \delta_5=0 
			\end{cases}. \tag{5}
		\end{align*}
		We need to define a map $\varphi: M_{\epsilon_1} (b-1)\longrightarrow R$ that is injective, i.e., that continues to satisfy (5) when applying $\varphi$. It suffices to set
		\begin{align*}
			\varphi(\delta_2) &= (2b+1)(4b+2) yz^2,\\
			\varphi(\delta_3) &= (b+1)(4b+2)x^2z,\\
			\varphi(\delta_4) &= (4b+2)xyz,\\
			\varphi(\delta_5) &= (2b+1)y^bz-(b+1)x^{b+1}.
		\end{align*}
		Thus, $\varphi$ will be injective and therefore \[M_{\epsilon_1} (b-1)\simeq \mathrm{Im}\varphi = \langle yz^2, x^2z, xyz, (2b+1)y^bz-(b+1)x^{b+1} \rangle.\] 
		Then $I_{\epsilon_1}  = \langle yz^2, x^2z, xyz, (2b+1)y^bz-(b+1)x^{b+1} \rangle$, with $M_{\epsilon_1} \simeq I_{\epsilon_1}(1-b)$. It is easy to see that $V(I_{\epsilon_1}) = \{ P_1, P_2\}$, where $P_1 = (0:1:0)$ and $P_2 = (0:0:1)$. We shall compute the local Bourbaki degree at each of these points. In the case of the point $P_1$, dehomogenizing with respect to $y$, we obtain $I_{\epsilon_1} = \langle z^2, x^2z, xz, (2b+1)z-(b+1)x^{ b+1}\rangle$, and a simple calculation shows that $\mathrm{Bour}_{ P_1} (F)= b+2$. Similarly, one shows that $\mathrm{Bour}_{P_2}(F) = 2$. 
		
Therefore, $\mathrm{Bour} (F) = \mathrm{Bour}_{ P_1} (F)+\mathrm{Bour}_{ P_2} (F) = b+2+2=b+4$.\end{proof}
	
	\begin{prop}
		For every $n \in \mathbb{N}$, there exists a plane curve $F$ in $\mathbb{P}^2$ such that $\mathrm{Bour}(F) = n$. 
	\end{prop}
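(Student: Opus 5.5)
The plan is to combine Proposition \ref{prop3} with a small number of explicit examples that cover the remaining values of $n$. Proposition \ref{prop3} gives $\mathrm{Bour}(F)=b+4$ for every $b\geq 2$, so every $n\geq 6$ is realized by the curve $x^{2b+1}z+x^{b+1}y^{b+1}+y^{2b+1}z$ with $b=n-4$. It therefore suffices to produce curves with $\mathrm{Bour}(F)=n$ for $n\in\{0,1,2,3,4,5\}$ (taking $\mathbb{N}$ to include $0$; if not, drop that case).

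Most of the small values are already available in the paper.
\begin{itemize}
\item For $n=0$, any free curve works, since $\mathrm{Bour}(F)=0$ for free curves, as noted in the preliminaries. A concrete choice is $xyz$, whose syzygy module is free; this can be checked through the formula $d^2+e(e-d)-\tau$ with $d=2$, $e=1$, $\tau=3$, which gives $4-1-3=0$.
\item For $n=1$, Proposition \ref{prop1} applies.
\item For $n=2$, Example \ref{ex2} applies.
\item For $n=3$, Example \ref{ex1} applies.
\end{itemize}

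This leaves $n=4$ and $n=5$. For these I would first evaluate the computation of Proposition \ref{prop3} at $b=1$. That gives the quartic $x^3z+x^2y^2+y^3z$, and the guess is $\mathrm{Bour}=5$. I expect the syzygy structure to change at $b=1$: in Proposition \ref{prop3} the generator degrees are $b+2$ and $2b$, which coincide when $b=1$. So the syzygies would need to be recomputed with \cite{Singular}, the Bourbaki ideal rebuilt via the map $\varphi$ as in Section 2, and the local Bourbaki formula (Corollary \ref{cor3}) applied at the points of $V(I_\epsilon)$.

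For $n=4$ (and for $n=5$ if the $b=1$ case fails), I would search among simple quartics and quintics with few singularities. The cleanest route is the global formula $\mathrm{Bour}(F)=d^2+e(e-d)-\tau(F)$, which only requires $e$ and $\tau$. For example, for a quartic ($d=3$) with $e=2$, the formula reads $\mathrm{Bour}(F)=7-\tau$; so $\tau=3$ gives $n=4$ and $\tau=2$ gives $n=5$. A quartic with two or three nodes, where the minimal syzygy has degree $2$, would then suffice. Candidates include $x^2y^2+y^2z^2+z^2x^2$-type curves or suitable perturbations of them. After choosing a candidate, I would compute $\mathrm{Syz}(J_F)$, confirm that $e=2$ and that $\tau$ has the required value, and, in the spirit of the paper, double-check the result with the local formula.

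The main obstacle is exactly these two sporadic cases. Finding explicit curves, verifying their minimal syzygy degree, and constructing $I_\epsilon$ rely on computer algebra rather than a uniform argument. The large-$n$ range is immediate from Proposition \ref{prop3}.
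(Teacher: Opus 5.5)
Your reduction matches the paper's. Proposition \ref{prop3} with $b=n-4$ covers $n\geq 6$, and Proposition \ref{prop1} and Examples \ref{ex2}, \ref{ex1} cover $n=1,2,3$. Your choice of $xyz$ for $n=0$ ($e=1$, $\tau=3$, so $4-1-3=0$) is fine and simpler than the paper's Saito-criterion family $F_a$.

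The gap is $n=4$ and $n=5$. The statement is only proved once explicit curves with these Bourbaki degrees are exhibited. You leave both as a search, and the leads you give do not work.

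First, the degrees $b+2$ and $2b$ in Proposition \ref{prop3} coincide at $b=2$, not $b=1$; at $b=1$ their order flips. Indeed $x^3z+x^2y^2+y^3z$ has the quadratic syzygy $(2y^2-3xz,\ 2x^2-3yz,\ 9z^2-4xy)$, so $e=2$. Its only singular point is an ordinary triple point at $(0:0:1)$, with $\tau=4$. Hence $\mathrm{Bour}=9-2-4=3$, not $5$.

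More fundamentally, no quartic has Bourbaki degree $5$. The du Plessis--Wall lower bound $d(d-e)\leq\tau(F)$ (characteristic $0$), substituted into $\mathrm{Bour}(F)=d^2+e(e-d)-\tau(F)$, gives $\mathrm{Bour}(F)\leq e^2$. So for quartics:
\begin{itemize}
\item $e=1$ gives $\mathrm{Bour}\leq 1$;
\item $e=2$ gives $\mathrm{Bour}\leq 4$, so your target ``$e=2$, $\tau=2$'' violates $\tau\geq 3$;
\item $e=3$: the refined bound $\tau\leq d^2-e(d-e)-\binom{2e-d+1}{2}=3$ forces $\mathrm{Bour}\geq 6$.
\end{itemize}
So $n=5$ needs degree at least $5$. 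The paper uses the quintic $x^5+x^4y+x^3z^2+y^2z^3$ from \cite[Example 2.8]{Marcos}. It has an $A_2$ point at $(0:0:1)$ and an $E_6$ point at $(0:1:0)$, so $\tau=8$. A direct check shows it has no quadratic syzygy, so $e=3$ and $\mathrm{Bour}=16-3-8=5$.

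For $n=4$, your numerical target (a quartic with $e=2$, $\tau=3$) is attainable, but the candidate you name does not achieve it. The curve $x^2y^2+y^2z^2+z^2x^2$ has exactly three nodes, so $\tau=3$. However, it has no syzygy of degree $2$: split a putative quadratic syzygy by the $(\mathbb{Z}/2)^3$-parity of exponents, and each component is forced to vanish. So $e=3$ and $\mathrm{Bour}=6$.

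The paper instead takes $x^3y+x^2y^2+y^4-x^4+y^2z^2$, citing \cite{Daniel}. Its only singularity is an $A_3$ point at $(0:0:1)$, so $\tau=3$. The triple $(2yz,\ 8yz,\ -11x^2-2xy-16y^2-8z^2)$ is a syzygy, so $e=2$ and $\mathrm{Bour}=9-2-3=4$.

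Without explicit curves like these for $n=4$ and $n=5$, your argument does not prove the proposition.
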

	
	\begin{proof}
		It suffices to present curves for the cases $n=0,1,2,3,4,5$, and the result follows by Proposition \ref{prop3}.

Consider the following

\begin{center}
    \begin{tabular}{c|c}
			\text{Curve equation}& \text{Bourbaki degree } \vspace{0.1cm}\\
			
			\hline 
			\vspace{0.1cm}
			$F_a: x^{2a+1} + x^a y^{a+1} + y^{2a}z = 0$, $a \geq 2$& $0$ \\
			\hline
			\vspace{0.1cm}
			$F_{m,n}: y^n z^{m-n}- x^m=0$, $n>m>1$ & $1$ \\
			\hline
			\vspace{0.1cm}
			$F_2: y^2z^2 - x^4 + 2x^3z - x^2z^2 = 0$& $2$ \\
			\hline
			\vspace{0.1cm}
			$F_3: x^3 + x^2z -y^2z =0$ & $3$ \\
			\hline
			\vspace{0.1cm}
			$F_4: x^3y+x^2y^2 +y^4 -x^4+y^2z^2=0$& $4$ \\
			\hline
            \vspace{0.1cm}
            $F_5: x^5 + x^4y+ x^3z^2 +y^2z^3=0$ & $5$ \\
            \hline
            
		\end{tabular}
\end{center}
   In Examples \ref{ex1} and \ref{ex2}, we have proved that the Bourbaki degree of $F_2$ and $F_3$ are $2$ and $3$, respectively. Moreover, by Proposition \ref{prop1}, one has $\mathrm{Bour}(F_{m,n}) = 1$. The calculus of the Bourbaki degree of $F_4$ and $F_5$ can be found in \cite{Daniel} and \cite[Example 2.8]{Marcos}, respectively. Thus, it remains to show that $\mathrm{Bour}(F_a) = 0$, for all $a\geq 2$.

Computing the Jacobian ideal of $F_a$, we obtain
    \[ 
    J_{F_a} = \langle (2a+1)x^{2a} + ax^{a-1} y^{a+1}, (a+1)x^a y^a+2a y^{2a-1}z, y^{2a} \rangle.
    \]
 Now, consider  \[ \theta_1 = (0, -y^a, (a+1)x^{a} + 2ay^{a-1}z)\] and \[ \theta_2 = (-(a+1)^2y^a, (a+1)(2a+1)x^a - 2a(2a+1)y^{a-1}z, a(a+1)^2x^{a-1}y + 4a^2(2a+1)y^{a-2}z^2).\]
A simple calculation shows that $\theta_1, \theta_2 \in \mathrm{Syz}(J_F)$. Furthermore, both syzygies are linearly independent and $\deg \theta_1 + \deg \theta_2 +1 = \deg F_a$. Then, by Saito's criterion (see for instance \cite[P. 273]{Saito}),  we conclude that $F_a$ is a free curve, for all $a \geq 2$. Therefore, $\mathrm{Bour}(F_a) = 0$. \end{proof}

	\section{Acknowledgments}

    The first author thanks Marcos Jardim for introducing the problem and for many fruitful discussions.  The second author thanks Aldicio Miranda for valuable assistance with the use of the Singular software and Michelle Morgado for fruitful discussions.	
		
		\bibliographystyle{plain}
		\bibliography{referencias}

\end{document}